\DeclareMathAlphabet{\eusm}{OT1}{eusm}{m}{n}
\newtheorem{thm}{Theorem}[section]
\newtheorem{cor}[thm]{Corollary}
\newtheorem{exam}[thm]{Example}
\newtheorem{lem}[thm]{Lemma}
\def\vsp{\vspace{1ex}}
\begin{document}
\begin{center}
{\rm {\LARGE A bijection between the indecomposable summands of two multiplicity free tilting modules}}
\end{center}
\begin{center}
Dedicated to the memory of Andrzej Skowroński.
\end{center}
\vsp

\begin{center}
{\rm {\Large Gabriella D$^{'}$Este$^a$ and H. Melis Tekin Akcin$^b$
}}\\
${}^a$ Department of Mathematics, University of Milano,\\
Milano, Italy\\
e-mail: gabriella.deste@unimi.it\\

${}^b$ Department of Mathematics, Hacettepe University,\\
06800 Beytepe, Ankara, Turkey\\
e-mail: hmtekin@hacettepe.edu.tr\\
\end{center}

\vsp
\begin{abstract}

We show that there is a reflection type bijection between the indecomposable summands of two multiplicity free tilting modules $X$ and $Y$. This bijection fixes the common indecomposable summands of $X$  and  $Y$ and sends indecomposable projective (resp., injective) summands of exactly one module to non-projective (resp., non-injective) summands of the other. Moreover, this bijection interchanges the two possible non-isomorphic complements of an almost complete tilting module.\\

{\em Key Words:  Tilting modules, partial tilting modules, short exact sequences, quivers.}

{\em Mathematics subject classification 2020: Primary: 16G20; Secondary: 16D10.}

\end{abstract}

\section{Introduction}
The word "reflection" appears in the title of the important paper \cite{BB} by Brenner and Butler which is the origin of the tilting theory. In
this paper we investigate bijections (between the indecomposable summands of multiplicity free tilting modules of finite dimension) which
behave like "reflections". For instance, let $A$ be the $K$-algebra $\left( \begin{array}{cc} K & 0 \\ K & K \\ \end{array} \right)$ given by
the quiver
\begin{tikzcd}
\begin{array}{c}\bullet \\ 1 \end{array} \arrow[r, shift left] & \begin{array}{c} \bullet \\ 2 \end{array}
\end{tikzcd}. Then the regular module $X=\begin{array}{c}1 \\ 2 \end{array}\oplus 2$ and the minimal injective cogenerator $Y=\begin{array}{c}
1 \\ 2 \end{array}\oplus 1$ are multiplicity free tilting modules. Since the Auslander-Reiten quiver is of the form
\begin{center}
$\begin{tikzcd}
                                                     & \begin{array}{c} 1\\2\\ \end{array} \arrow[rd] &
                                                     \\
\begin{array}{c} 2 \end{array} \arrow[ru] &                                                        & \begin{array}{c}1
\end{array}
\end{tikzcd}
$,
\end{center} we obtain $Y$ from $X$ by making a reflection along the vertical line passing through the vertex $\begin{array}{c} 1 \\ 2
\end{array}$. In this way, we exchange the end terms $2$ and $1$ of the Auslander-Reiten sequence
\begin{tikzcd}
0 \arrow[r] & 2 \arrow[r] & \begin{array}{c} 1\\2 \end{array} \arrow[r] & 1 \arrow[r] & 0
\end{tikzcd}. The observation of many examples suggests that this is not a particular case, as we will see in Section 2 (Theorem \ref{Theorem4}). The exact sequences in play are non-split exact sequences, but not necessarily Auslander-Reiten sequences (Example \ref{Ex1}(ii)). Moreover, our bijections
between indecomposables are not always unique (Example \ref{Ex1}(ii)). We will see (Examples \ref{Ex2} and \ref{Ex3}) other examples, where there is exactly one bijection satisfying our hypotheses and the two tilting modules are neither projective nor injective, as in the example described by the previous picture. 

Throughout this note, we assume that $K$ is a field, $A$ is a finite dimensional $K$-algebra and almost all the modules are finite dimensional left $A$-modules unless otherwise stated. For any module
$M$, we denote by $pdimM$ and $idimM$ the projective and the injective dimension of $M$, respectively while we denote by $addM$ the class of
all finite direct sums of direct summands of $M$. We denote by $Gen(M)$ (resp., $Cogen(M)$) the class of all modules generated (resp., cogenerated) by $M$. We will use the definition of tilting module given in \cite[page 167]{Ringel}. If $T$ is a finite
dimesional $A$-module, then we say that $T$ is a \emph{tilting module} if the following conditions hold:
\begin{enumerate}
  \item [($\alpha$)] $pdimT\leq 1$;
  \item [($\beta$)] Ext$_{A}^{1}(T,T)=0$;
  \item [($\gamma$)] The number of isomorphism classes of the indecomposable summands of $T$ is equal to the number of simple $A$-modules.
\end{enumerate}
Moreover, we say that $T$ is a \emph{partial tilting module} if $T$ satisfies conditions $(\alpha)$ and $(\beta)$. We know from \cite[Corollary 6.2]{HR3} that a partial tilting module is a tilting module if and only if the following condition holds:

\begin{enumerate}
\item [($\gamma'$)] There is a short exact sequence of the form $0\rightarrow A \rightarrow T' \rightarrow T''\rightarrow 0$, where $T'$ and $T''$ are in $addT$.
\end{enumerate}
Finally, we say that $T$ is a
\emph{cotilting module} if $T$ satisfies conditions $(\alpha^{*})$, $(\beta)$ and $(\gamma)$, where $(\alpha^{*})$ is the following condition:

\begin{enumerate}
\item [($\alpha^{*}$)]~idim$T\leq 1$.
\end{enumerate}
We know from Bongartz's lemma \cite{Bongartz} that any partial tilting module is a direct summand of a tilting module. Let $n>1$ be the number of simple modules, $T$ be a partial tilting module with exactly $n-1$ indecomposable direct summands (up to isomorphism) and $X$ be an indecomposable module which is not isomorphic to a direct summand of $T$. Then we say that $T$ is an {\em almost complete tilting module} and $X$ is called {\em a complement of $T$} if $T\oplus X$ is a tilting module. We know from \cite[Corollary 1.2]{Happel} that an almost complete tilting module has either one or two indecomposable complements. Moreover, \cite[Theorem 1.1]{Happel} if $X$ and $Y$ are two non-isomorphic complements of $T$ such that $Ext^{1}(Y,X)\neq 0$, then there is an exact sequence $0\rightarrow X\rightarrow E \rightarrow Y\rightarrow 0$ with $E\in addT.$
A "more regular" \cite[Abstract]{Buan} and "more symmetric" \cite[page 209]{T} behaviour appears 
in the cluster category of a finite dimensional hereditary algebra \cite[Theorem 5.1]{Buan}. Indeed, in this new and larger category, given an object which is the analogue of an almost complete tilting module, "there are always two ways of completing such an object" \cite[Theorem 2.4]{T}. The main result of this paper (Theorem  \ref{Theorem4}) describes a similar situation. In fact, a module which is not fixed under our bijections is associated to its image by a mutation similar to the mutation between two non-isomorphic complements of an almost complete tilting module.

In a forthcoming paper, we will investigate whether the indecomposable summands of two "non-classical" tilting modules, that is tilting modules of projective dimension $>1$ \cite{HR3} or $\tau$-tilting modules \cite{Iyama} are involved in a bijection similar to that described in this paper.   

If $A$ is the algebra given by a quiver $Q$ with vertices $1,\ldots, n$, then the symbols $1, \ldots, n$ will denote also the simple modules
corresponding to the vertices $1,\ldots, n$. Finally, with the usual conventions (\cite{ARS} or \cite{S}) pictures of the form

$$
\begin{array}{rlc}
 1\\
 2\\
 3\\
 4\\
 1
\end{array}
\quad,
\begin{array}{rlc}
 ~~1~~\\
 2~3~4
\end{array}
\quad,
\begin{array}{rlc}
 1~2~3\\
 ~~4~~
\end{array}
\quad,
\begin{array}{rlc}
 ~1~\\
 2~3\\
 ~4~
\end{array}
$$
describe indecomposable modules. In section 2, we prove the main results of this paper. We collect all the examples in section 3.

\section{Results}

We begin with three lemmas.
\begin{lem}\label{Lemma1}\cite[page 167]{Ringel} 
Let $A$ be a finite dimensional $K$-algebra and let $T$ and $S$ be finite dimensional $A$-modules
such that $T$ is a tilting module and $T\oplus S$ is a partial tilting module. Then we have $S\in addT$.
\end{lem}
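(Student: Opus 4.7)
The plan is to combine Bongartz's lemma (already cited in the excerpt) with the numerical condition $(\gamma)$ from the definition of a tilting module, reducing the statement to a counting argument on non-isomorphic indecomposable summands. The intuition is that $T$ already uses up all the ``slots'' available to a partial tilting module, so any extra summand $S$ cannot introduce a genuinely new indecomposable.

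More precisely, first I would observe that since $T\oplus S$ is a partial tilting module, Bongartz's lemma produces a tilting module $U$ such that $T\oplus S\in \mathrm{add}\,U$. By condition $(\gamma)$, $U$ has exactly $n$ pairwise non-isomorphic indecomposable summands, where $n$ is the number of simple $A$-modules. Next, since $T$ itself satisfies $(\gamma)$, it already has exactly $n$ pairwise non-isomorphic indecomposable summands $T_{1},\ldots,T_{n}$; and because $T$ is a direct summand of $U$ (via $T\subseteq T\oplus S\subseteq U$), each $T_{i}$ occurs among the indecomposable summands of $U$. Therefore $\{T_{1},\ldots,T_{n}\}$ must account, up to isomorphism, for \emph{all} indecomposable summands of $U$.

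Finally, any indecomposable summand $S'$ of $S$ is in particular an indecomposable summand of $U$, hence by the Krull--Schmidt theorem (which applies since everything is finite dimensional over the field $K$) it is isomorphic to some $T_{i}$, i.e.\ $S'\in \mathrm{add}\,T$. Closing under finite direct sums yields $S\in \mathrm{add}\,T$, which is the claim.

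The argument is essentially a one-line pigeonhole, so I do not expect any real obstacle; the only point that needs minor care is the justification that ``$T\oplus S$ is a direct summand of a tilting module $U$'' implies each indecomposable summand of $S$ sits inside $U$ with the same isotypical decomposition as in $T\oplus S$, which is exactly Krull--Schmidt. No exactness, extension, or projective dimension calculation is needed beyond what is packaged into the definition of tilting and Bongartz's lemma.
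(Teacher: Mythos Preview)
Your argument is correct and is exactly the standard proof of this fact. Note, however, that the paper does not give its own proof of this lemma: it is stated with a citation to \cite[page 167]{Ringel} and used as a black box. Your Bongartz-plus-counting argument is precisely the one behind that citation, so there is nothing to compare; you have simply supplied the omitted justification.
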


\begin{lem}\label{Lemma2}
Let $A$ be a finite dimensional $K$-algebra. Let $X=X_1\oplus X_2 \oplus \ldots \oplus X_{n}$ and 
$Y=Y_1\oplus Y_2 \oplus \ldots \oplus Y_{n}$ be finite dimensional multiplicity free tilting modules with $X_{i}$ and $Y_{i}$
indecomposable for all $i=1,\ldots, n$. Let $F(i)$ denote the following subset of $\{1,\ldots ,n\}$ for any $i$:
\begin{center}
$F(i)= \{j \mid \text{either } X_{i}\simeq Y_{j} \text{ or } Ext^1_{A}(X_{i},Y_{j})\oplus 
Ext^1_{A}(Y_{j},X_{i})\neq 0 \}$.
\end{center}
The following facts hold:
\begin{itemize}
\item[(a)] $F(i)\neq \emptyset$ for all $i=1,\ldots, n$.
\item[(b)]If $1\leq i_1< i_2 < \ldots <i_{m}\leq n$ and $m\geq 2$, then we have 
$\vert F(i_1)\cup \ldots \cup F(i_m)\vert\geq m$.
\end{itemize}
\end{lem}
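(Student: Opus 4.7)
The plan is to prove both parts by contradiction, using Lemma \ref{Lemma1} for (a) and Bongartz's lemma for (b). The guiding observation is that $F(i)$ is defined precisely as the set of indices $j$ for which $X_i$ would fail to satisfy the partial tilting conditions when adjoined to $Y_j$ (either they coincide, or some $\mathrm{Ext}^1$ between them is nonzero). So ``$F(i)$ is small'' should force $X_i$ to sit inside $\mathrm{add}(Y)$.

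For (a), I would assume $F(i) = \emptyset$. By definition this means $X_i \not\simeq Y_j$ and $\mathrm{Ext}^1_A(X_i, Y_j) = \mathrm{Ext}^1_A(Y_j, X_i) = 0$ for every $j$, hence $\mathrm{Ext}^1_A(X_i, Y) = \mathrm{Ext}^1_A(Y, X_i) = 0$. Since $X_i$ is a summand of the tilting module $X$, we have $\mathrm{pdim}\, X_i \leq 1$ and $\mathrm{Ext}^1_A(X_i, X_i) = 0$; combined with the tilting property of $Y$, this makes $Y \oplus X_i$ a partial tilting module. Lemma \ref{Lemma1} applied with tilting module $Y$ then forces $X_i \in \mathrm{add}(Y)$, so $X_i \simeq Y_j$ for some $j$, contradicting $F(i) = \emptyset$.

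For (b), I would argue by contradiction: assume the union $J := F(i_1) \cup \cdots \cup F(i_m)$ has $k < m$ elements and build
$$Z \;=\; X_{i_1} \oplus \cdots \oplus X_{i_m} \oplus \bigoplus_{j \notin J} Y_j.$$
First I would verify that $Z$ is a partial tilting module. Condition $(\alpha)$ is immediate since every summand of $Z$ is a summand of a tilting module. For $(\beta)$, split $\mathrm{Ext}^1_A(Z,Z)$ into three kinds of pairs: among the $X_{i_s}$'s it vanishes because $X$ is tilting; among the $Y_j$'s with $j \notin J$ it vanishes because $Y$ is tilting; and in the cross pairs $(X_{i_s}, Y_j)$ with $j \notin J$, both $\mathrm{Ext}^1_A(X_{i_s}, Y_j)$ and $\mathrm{Ext}^1_A(Y_j, X_{i_s})$ vanish because $j \notin F(i_s)$. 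Next I would check that the $m + (n-k)$ displayed summands are pairwise non-isomorphic: the $X_{i_s}$'s among themselves and the $Y_j$'s among themselves by multiplicity-freeness, and $X_{i_s} \not\simeq Y_j$ for $j \notin J$ directly from $j \notin F(i_s)$.

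The main (and final) step is a counting contradiction, and this is where Bongartz's lemma enters: $Z$ is a direct summand of some tilting module $T$, which by condition $(\gamma)$ has exactly $n$ isomorphism classes of indecomposable summands, while $Z$ alone already contributes $m + (n-k) > n$ of them. This contradiction establishes $|F(i_1) \cup \cdots \cup F(i_m)| \geq m$. The only place where subtlety is required is the uniform verification that all summands of $Z$ are pairwise non-isomorphic; the definition of $F$ is tailored so that the ``$X_{i_s} \simeq Y_j$'' clause rules this out automatically once $j \notin F(i_s)$, so no extra work is needed beyond unpacking definitions.
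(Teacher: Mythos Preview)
Your proof is correct and follows essentially the same strategy as the paper: for (a) you both adjoin $X_i$ to $Y$ and invoke Lemma~\ref{Lemma1}, and for (b) you both build a multiplicity-free partial tilting module with more than $n$ pairwise non-isomorphic indecomposable summands to reach a contradiction. The only cosmetic difference is the final counting step in (b): you invoke Bongartz's lemma to embed $Z$ in a tilting module, whereas the paper argues directly via Lemma~\ref{Lemma1} and condition~$(\gamma)$ that no partial tilting module with $n+1$ indecomposable summands can be multiplicity free.
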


\begin{proof}
Assume on the contrary that $F(i)=\emptyset$ for some $i\in \{1,\ldots, n\}$. Let $U=X_{i}\oplus Y= X_{i}\oplus Y_1 \oplus \ldots \oplus Y_{n}.$ Then we have $pdimU \leq 1$. Moreover, the assumption $F(i)=\emptyset$ implies that $X_{i}\not\simeq Y_{j}$ and $Ext^1_{A}(X_{i}, Y_{j})\oplus Ext^1_{A}(Y_{j},X_{i})=0$ for any $j$.
It follows that 
\begin{itemize}
\item[(1)] $U$ is a multiplicity free partial tilting module.  
\end{itemize}
Since $Y$ is a tilting module, Lemma \ref{Lemma1} implies that $X_{i}\in addY$ and so
\begin{itemize}
\item[(2)] $U$ is not multiplicity free. 
\end{itemize}
This contradiction implies that $(a)$ holds.\\ 
Without loss of generality, we may assume $i_{t}=t$ for any $t\leq m$. Suppose 
by contradiction that $\vert F(1)\cup \ldots \cup F(m)\vert< m$. To simplify the notation
we may assume that $F(1)\cup \ldots \cup F(m)$ is a subset of $\{1,\ldots,m-1\}$.
Let $W=X_1\oplus \ldots \oplus X_{m}\oplus Y_{m}\oplus \ldots \oplus Y_{n}$. Then we have 
$pdimW\leq 1$. Moreover, for any $i\leq m$ and $j\geq m$, we have $X_{i}\not\simeq Y_{j}$
and $Ext^1_{A}(X_{i}, Y_{j})\oplus Ext^1_{A}(Y_{j},X_{i})=0$. Since $X$ and $Y$ are multiplicity free 
tilting modules, it follows that 
\begin{itemize}
\item[(3)] $W$ is a multiplicity free partial tilting module. 
\end{itemize}
To find a contradiction, let $M=M_1\oplus \ldots \oplus M_{n+1}$ be 
a partial tilting module such that $M_1, \ldots, M_{n+1}$ are indecomposable. 
Let $L=M_1 \oplus \ldots \oplus M_{n}$. Then one of the following cases occurs:
\begin{itemize}
\item[(4)] $L$ is a tilting module.
\item[(5)] $L$ is not multiplicity free.
\end{itemize} 
If $(4)$ holds, then we deduce from Lemma \ref{Lemma1} that $M_{n+1}\in addL$. This remark 
and $(5)$ imply that in both cases $M$ is not multiplicity free. This implies that 
\begin{itemize}
\item[(6)] $W$ is not multiplicity free.
\end{itemize}
This contradiction implies that $(b)$ holds.

\end{proof}

\begin{lem}\label{Lemma3}
Let $m$, $n$ be natural numbers with $1\leq m <n$. Let $F(1),\ldots, F(n)$ be subsets of $\{1,\ldots, n\}$ such that $F(i)\neq \emptyset$ for all $i=1,\ldots, n$ and $\vert F(i_1)\cup \ldots \cup F(i_k)\vert\geq k$ for any $1\leq i_1< i_2 < \ldots <i_{k}\leq n$ and $k\leq n$. Then there is an injective map
\begin{center}
$h: \{1,\ldots,m+1\}\rightarrow \{1,\ldots,n\}$
\end{center}
such that $h(i)\in F(i)$ for $i \leq m+1$.
\end{lem}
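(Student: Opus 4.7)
The plan is to recognize Lemma \ref{Lemma3} as an instance of \emph{Hall's marriage theorem} (the theorem of distinct representatives): the hypothesis $\vert F(i_1)\cup \ldots \cup F(i_k)\vert \geq k$ is exactly Hall's condition for the family $F(1),\ldots,F(n)$, and an injective map $h$ with $h(i)\in F(i)$ is a system of distinct representatives. In particular, since $m+1 \leq n$, Hall's condition is inherited by every subfamily of $\{F(1),\ldots, F(m+1)\}$, so it suffices to produce an SDR for these first $m+1$ sets.

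Concretely, I would argue by induction on $m+1$. The base case $m=0$ is immediate, as $F(1)\neq \emptyset$ allows us to set $h(1)$ to be any element of $F(1)$. For the inductive step, assuming the result for all smaller values, I would split into two cases following the classical proof of Hall's theorem.

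In the \emph{slack} case, suppose that for every nonempty proper subset $S\subsetneq \{1,\ldots, m+1\}$ one has $\left|\bigcup_{i\in S} F(i)\right| \geq |S|+1$. Then pick any $y\in F(m+1)$ and set $h(m+1)=y$. Define $F'(i)=F(i)\setminus\{y\}$ for $i\leq m$. The slack inequality guarantees that the family $F'(1),\ldots,F'(m)$ still satisfies Hall's condition, so the inductive hypothesis produces an injection $\{1,\ldots,m\}\to \{1,\ldots,n\}\setminus\{y\}$ that extends to the desired $h$. In the \emph{tight} case, there is a nonempty proper subset $S\subsetneq \{1,\ldots, m+1\}$ with $\left|\bigcup_{i\in S} F(i)\right| = |S|$; apply induction to the sub-family indexed by $S$ to obtain an SDR $h_0$ on $S$ filling up the whole union $U=\bigcup_{i\in S}F(i)$, and then apply induction to the family $\{F(j)\setminus U : j\in \{1,\ldots, m+1\}\setminus S\}$, verifying that this reduced family still satisfies Hall's condition (if it failed for some $T\subseteq \{1,\ldots, m+1\}\setminus S$, then $S\cup T$ would violate the original Hall condition). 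Gluing the two partial SDRs gives $h$.

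The only real obstacle is the careful verification in the tight case that Hall's condition is preserved after removing the set $U$ and the indices in $S$; this is standard in the Hall-theorem literature and reduces to an inclusion-exclusion count. Since the statement is a well-known combinatorial result, an alternative (and shorter) presentation would simply invoke Hall's theorem by name once the dictionary above is made explicit.
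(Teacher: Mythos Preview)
Your proposal is correct: the lemma is precisely Hall's marriage theorem, and the slack/tight dichotomy you outline is the standard textbook proof. Invoking Hall by name, as you suggest at the end, would be entirely legitimate here.

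The paper, however, does not cite Hall and instead gives a direct constructive argument of a different flavour. It proceeds by induction on $m$, assuming (after relabelling) that $i\in F(i)$ for $i\leq m$, and then tries to extend this partial SDR to $m+1$ by an \emph{augmenting-chain} search: if $F(m+1)\not\subseteq\{1,\ldots,m\}$ one extends directly; otherwise one picks $i\in F(m+1)$, looks for an element of $F(i)$ outside $\{1,\ldots,m\}$, and if none exists, continues to enlarge the set of indices reached, using the hypothesis $|F(i_1)\cup\cdots\cup F(i_k)|\geq k$ at each step to guarantee a new index is available. Eventually the chain must exit $\{1,\ldots,m\}$, and one reassigns along it. This is essentially the alternating-path proof of Hall's theorem rather than the slack/tight decomposition. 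Your approach is shorter and more conceptual; the paper's is more explicit about how the injection is actually built, at the cost of a long case analysis.
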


\begin{proof}
We proceed by induction on $m$. Assume $m=1$. Since $\vert F(1)\cup F(2)\vert \geq 2$, there are two distinct elements $x$ and $y$ such that $x\in F(1)$ and $y\in F(2)$. Hence, the map such that $1\mapsto x, 2\mapsto y$ has the desired property.\\ 
Without loss of generality, assume that $i\in F(i)$ for any $i\leq m.$ If there exists some 
$x\in F(m+1)\setminus \{1, \ldots, m\}$, then the map 
$h: \{1,\ldots, m+1\} \rightarrow \{1,\ldots, n\}$ such that $m+1 \mapsto x$ and $i\mapsto i$ for 
$i\leq m$ is injective.\\ 
Assume now $F(m+1)\subseteq \{1, \ldots, m\}$. Without loss of generality, we may assume\\
$(1)$ $F(m+1)=\{1, \ldots, r\}$ for some $r\leq m$.\\
Suppose there exists $x\in F(i)\setminus \{1, \ldots, m\}$ for some $i\leq r$. In this case, $h$ may be the map such that\\
$(2)$ $m+1\mapsto i$, $i\mapsto x$ and $j\mapsto j$ for any $j\leq m$ and $j\neq i$.\\
Assume now that\\ 
$(3)$ $F(1)\cup \ldots \cup F(r)\subseteq \{1, \ldots, m\}$.\\
Then we deduce from $(1)$ and $(3)$ that $F(1)\cup\ldots\cup F(r)$ is a subset of $\{1, \ldots, m\}$ with at least $r+1$ elements. Hence, we have $m\geq r+1$.
Choose some 
\begin{center}
$l_1\in (F(1)\cup \ldots \cup F(r))\setminus \{1, \ldots, r\}$.
\end{center}
Assume that $l_1 \in F(i)$ for some $i\leq r$. If there exists some $x\in F(l_1)\setminus \{1,\ldots, m\}$, then there is an injective map $h: \{1,\ldots,m+1\}\rightarrow \{1,\ldots,n\}$ such that \\
$(4)$ $m+1\mapsto i$, $i\mapsto l_1$, $l_1\mapsto x$ and $j\mapsto j$ for any $j\leq m$ and $j\notin \{i,l_1\}$.\\
Assume now that\\ 
$(5)$ $F(l_1)\subseteq \{1, \ldots, m\}.$\\
Then we deduce from $(1)$, $(3)$ and $(5)$ that $F(1)\cup \ldots \cup F(r)\cup F(l_1)$ is a subset of $\{1, \ldots, m\}$ with at least $r+2$ elements. Hence, we have $m\geq r+2$. Fix some
 \begin{center}
$l_2\in (F(1)\cup \ldots \cup F(r)\cup F(l_1))\setminus \{1,\ldots, r,l_1\}$.
\end{center}  
Suppose first that there is some  $x\in F(l_2)\setminus \{1,\ldots, m\}$. 
If $l_2\in F(l_1)$, then $h$ may be the map such that\\
$(6)$ $m+1\mapsto i$, $i\mapsto l_1$, $l_1\mapsto l_2$, $l_2 \mapsto x$ and $j\mapsto j$ for any $j\leq m$ and $j\notin \{i,l_1, l_2\}$.\\ 
If $l_2\in F(s)$ for some $s\leq r$, then $h$ may be the map such that \\
$(7)$ $m+1\mapsto s$, $s\mapsto l_2$, $l_2\mapsto x$ and $j\mapsto j$
for any $j\leq m$ and $j\notin \{s,l_2\}$.\\
Suppose finally that \\
$(8)$ $F(l_2)\subseteq \{1,\ldots, m\}$.\\
Then we deduce from $(1)$, $(3)$, $(5)$ and $(8)$ that $F(1)\cup \ldots \cup F(r)\cup F(l_1)\cup F(l_2)$ is a subset of $\{1, \ldots, m\}$ with at least $r+3$ elements. Consequently, we have $m\geq r+3$.  
Fix some 
\begin{center}
$l_3\in (F(1)\cup \ldots \cup F(r)\cup F(l_1)\cup F(l_2))\setminus \{1,\ldots,r,l_1,l_2\}$.
\end{center} 
Suppose there exists some $x\in F(l_3)\setminus \{1,\ldots,m\}$. If $l_3\in F(s)$ 
for some $s\leq r$, then $h$ may be the map such that \\
$(9)$ $m+1\mapsto s$, $s\mapsto l_3$, $l_3\mapsto x$
and $j\mapsto j$ for any $j\leq m$ and $j\notin \{s,l_3\}$.\\ 
If $l_3\in F(l_1)$, then $h$ may be the map such that \\
$(10)$ $m+1\mapsto i$, $i\mapsto l_1$, $l_1\mapsto l_3$, $l_3\mapsto x$ and $j\mapsto j$ for any $j\leq m$ and $j\notin \{i,l_1,l_3\}$.\\
Suppose now $l_3\in F(l_2)$. If $l_2\in F(s)$ for some $s\leq r$, then $h$ may be the map such that \\
$(11)$ $m+1\mapsto s$, $s\mapsto l_2$, $l_2\mapsto l_3$, $l_3\mapsto x$ and $j\mapsto j$ for any $j\leq m$ and $j\notin \{s,l_2,l_3\}$.\\
If $l_2\in F(l_1)$, then $h$ may be the map such that \\
$(12)$ $m+1\mapsto i$, $i\mapsto l_1$, $l_1\mapsto l_2$, $l_2\mapsto l_3$ $l_3\mapsto x$ and $j\mapsto j$ for any $j\leq m$ and $j\notin \{i,l_1,l_2,l_3\}$.\\
Suppose next that\\
$(13)$ $F(l_3)\subseteq \{1,\ldots, m\}$.\\
Then we deduce from $(1)$, $(3)$, $(5)$, $(8)$ and $(13)$ that $F(1)\cup \ldots \cup F(r)\cup F(l_1)\cup F(l_2)\cup F(l_3)$ is a subset of $\{1, \ldots, m\}$
with at least $r+4$ elements. Hence, we have $m\geq r+4$.
Consequently, after finitely many steps, we can find elements $l_1,\ldots, l_t$ such that 
$l_1\in (F(1)\cup \ldots \cup F(r))\setminus \{1,\ldots,r\}$, $l_{j}\in (F(1)\cup \ldots \cup F(r))\cup F(l_1)\cup \ldots\cup F(l_{j-1})\setminus \{1,\ldots,r, l_1, \ldots, l_{j-1}\}$ for any $j=2\ldots, t-1$, $F(l_{t})\nsubseteq \{1, \ldots, m\}$ and $F(1)\cup \ldots \cup F(r))\cup F(l_1)\cup \ldots\cup F(l_{t-1})\subseteq \{1, \ldots, m\}$. These results imply that there is an injective map $h: \{1,\ldots,m+1\}\rightarrow \{1,\ldots,n\}$. The proof of the lemma is completed. 
\end{proof}
\begin{exam}
There exist subsets $F(1),\ldots, F(4)$ of $\{1,2,3,4\}$ and an injective map
\begin{center}
$g: \{1,2,3\}\rightarrow \{1,2,3,4\}$
\end{center}
with the following properties:
\begin{itemize}
\item[(i)] $F(1),\ldots, F(4)$ satisfy the hypotheses of Lemma \ref{Lemma2} and $g(i)\in F(i)$
for any $i=1,2,3$.
\item[(ii)] There is a unique permutation $s$ of $\{1,2,3,4\}$ such that $s(i)\in F(i)$ for any $i$
and we have $s(i)\neq g(i)$ for any $i=1,2,3.$
\end{itemize}
\end{exam}
\noindent
{\bf Construction:}
Let $F(1)=\{1,2\}$, $F(2)=\{2,3\}$, $F(3)=\{3,4\}$, $F(4)=\{1\}$.
Next, let $g$ be the map such that $g(i)=i$ for $i=1,2,3$. Then clearly $(i)$ holds. Moreover, $s$ is the map such that $4\mapsto 1, 1\mapsto 2, 2\mapsto 3, 3\mapsto 4$. Hence, also $(ii)$ holds. 
$\square$

\begin{thm}\label{Theorem4}
Let $A$ be a finite dimensional $K$-algebra and let $n$ be a natural number. Let $X=X_1\oplus \ldots \oplus X_n$ and $Y=Y_1\oplus \ldots \oplus Y_n$ be finite dimensional multiplicity free tilting modules with $X_{i}$ and $Y_{i}$ indecomposable for all $i=1,\ldots, n$. Then there exists a permutation $s$ of $\{1,\ldots, n\}$ such that for any $i=1,\ldots, n$ either $X_{i}\simeq Y_{s(i)}$ or there exists a non-split exact sequence of the form 
\begin{center}
$0\rightarrow U \rightarrow V \rightarrow W \rightarrow 0$
\end{center}
with $\{U,W\}=\{X_{i},Y_{s(i)}\}$.
\end{thm}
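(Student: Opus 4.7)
The plan is to combine Lemmas \ref{Lemma2} and \ref{Lemma3} to manufacture the permutation $s$, then unpack the definition of the sets $F(i)$ to produce the required exact sequences.

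First I would introduce the sets $F(1), \ldots, F(n) \subseteq \{1, \ldots, n\}$ defined exactly as in Lemma \ref{Lemma2}. By that lemma each $F(i)$ is non-empty and $\vert F(i_1) \cup \cdots \cup F(i_k) \vert \geq k$ for every $1 \leq i_1 < \cdots < i_k \leq n$. These are precisely the hypotheses required by Lemma \ref{Lemma3}.

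Next, assuming $n \geq 2$, I would apply Lemma \ref{Lemma3} with $m = n-1$ to obtain an injective map $s : \{1, \ldots, n\} \to \{1, \ldots, n\}$ with $s(i) \in F(i)$ for every $i$. Because the domain and codomain have the same finite cardinality, $s$ is automatically bijective, i.e.\ a permutation of $\{1, \ldots, n\}$. The case $n = 1$ is trivial: part (a) of Lemma \ref{Lemma2} forces $F(1) = \{1\}$, so $s = \mathrm{id}$ already satisfies $s(1) \in F(1)$.

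Finally, for each fixed $i$, the relation $s(i) \in F(i)$ says that either $X_i \cong Y_{s(i)}$ (in which case no exact sequence is needed) or at least one of $Ext^1_A(X_i, Y_{s(i)})$ and $Ext^1_A(Y_{s(i)}, X_i)$ is non-zero. A non-zero class in the first group realises a non-split short exact sequence $0 \to Y_{s(i)} \to V \to X_i \to 0$ with $\{U, W\} = \{X_i, Y_{s(i)}\}$; the other case is symmetric. This supplies the required sequence and completes the argument.

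I do not foresee any real obstacle at this step: the hard work has already been distilled into the preceding lemmas. Lemma \ref{Lemma2} encodes the interaction between the multiplicity-free hypothesis, Bongartz's lemma, and the $Ext^1$-vanishing conditions as a Hall-type marriage condition, and Lemma \ref{Lemma3} is a constructive version of Hall's marriage theorem. Once both are in hand, Theorem \ref{Theorem4} is a formal corollary.
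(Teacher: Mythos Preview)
Your proposal is correct and follows essentially the same route as the paper: define the $F(i)$ via Lemma~\ref{Lemma2}, feed the Hall-type condition into Lemma~\ref{Lemma3} (with $m=n-1$) to obtain the permutation $s$, and then read off the exact sequence from $s(i)\in F(i)$. Your handling of the edge case $n=1$ and the remark that injectivity on a finite set of equal cardinality yields bijectivity are small clarifications the paper leaves implicit, but the argument is the same.
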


\begin{proof}
Let $F(1), \ldots F(n)$ be subsets of $\{1,\ldots,n\}$ defined in Lemma \ref{Lemma2} by the formula \\ 
$(1)\ F(i)= \{j \mid \text{either } X_{i}\simeq Y_{j} \text{ or } Ext^1_{A}(X_{i},Y_{j})\oplus 
Ext^1_{A}(Y_{j},X_{i})\neq 0 \}$.\\
Then Lemma \ref{Lemma2} guarantees that these subsets satisfy the properties of the subsets of Lemma \ref{Lemma3}. This observation and Lemma \ref{Lemma3} imply that \\
$(2)$ There is a permutation $s$ of $\{1,\ldots, n\}$ such that $s(i)\in F(i)$ for all $i=1,\ldots, n$.\\ Putting together $(1)$ and $(2)$, we conclude that either $X_{i}\simeq Y_{s(i)}$ or one of the vector spaces $Ext^1_{A}(X_{i}, Y_{s(i)})$ and  $Ext^1_{A}(Y_{s(i)},X_{i})$ is different from $0$. Therefore, we obtain the desired result.
\end{proof}

\begin{cor}
Let $A$ be a finite dimensional $K$-algebra and let $n$ be a natural number. Let $X=X_1\oplus \ldots \oplus X_n$ and $Y=Y_1\oplus \ldots \oplus Y_n$ be finite dimensional multiplicity free tilting modules with $X_{i}$ and $Y_{i}$ indecomposable for all $i=1,\ldots, n$. Then the followings hold:
\begin{itemize}
\item[(i)] If $X_{i}$ is projective, then either $X_{i}\in add Y$ or  there exists a permutation $s$ of $\{1,\ldots,n\}$ such that there is a non-split exact sequence of the form 
\begin{center}
$0\rightarrow X_{i}\rightarrow M\rightarrow Y_{s(i)}\rightarrow 0$
\end{center}
for some $A$-module $M$. 

\item[(ii)] If $X_{i}$ is injective, then either $X_{i}\in add Y$ or  there exists a permutation $s$ of $\{1,\ldots,n\}$ such that there is a non-split exact sequence of the form 
\begin{center}
$0\rightarrow Y_{s(i)}\rightarrow M\rightarrow X_{i}\rightarrow 0$
\end{center}
for some $A$-module $M$.

\item[(iii)] If $X_{i}$ is projective-injective, then $X_{i}\in add Y$ and we have 
$X_{i}\simeq Y_{s(i)}$ for some permutation $s$ of $\{1,\ldots,n\}$.

\end{itemize}

\end{cor}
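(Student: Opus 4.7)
The plan is to deduce all three items directly from Theorem~\ref{Theorem4} by recording what vanishing of $Ext^{1}$ forces when an end term of the extension is projective or injective.

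First I would apply Theorem~\ref{Theorem4} to $X$ and $Y$ to fix once and for all a permutation $s$ of $\{1,\ldots,n\}$ such that, for every $i$, either $X_{i}\simeq Y_{s(i)}$ or there is a non-split short exact sequence $0\rightarrow U\rightarrow V\rightarrow W\rightarrow 0$ with $\{U,W\}=\{X_{i},Y_{s(i)}\}$. When $X_{i}\simeq Y_{s(i)}$ we have $X_{i}\in addY$, which already supplies the first disjunct of (i), (ii) and (iii). So the remaining task is, assuming $X_{i}\not\simeq Y_{s(i)}$, to identify which side $X_{i}$ must sit on in the short exact sequence.

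For (i), if $X_{i}$ is projective then $Ext^{1}_{A}(X_{i},N)=0$ for every module $N$, so every short exact sequence ending in $X_{i}$ splits. Thus the non-split sequence supplied by Theorem~\ref{Theorem4} cannot have $X_{i}$ on the right and must take the form $0\rightarrow X_{i}\rightarrow M\rightarrow Y_{s(i)}\rightarrow 0$, as claimed. Part (ii) is completely dual: when $X_{i}$ is injective, $Ext^{1}_{A}(N,X_{i})=0$ for every $N$, ruling out sequences with $X_{i}$ on the left and leaving only $0\rightarrow Y_{s(i)}\rightarrow M\rightarrow X_{i}\rightarrow 0$. For (iii), $X_{i}$ being projective-injective kills both $Ext^{1}_{A}(X_{i},-)$ and $Ext^{1}_{A}(-,X_{i})$, so no non-split extension with one end term equal to $X_{i}$ can exist; the dichotomy of Theorem~\ref{Theorem4} then forces $X_{i}\simeq Y_{s(i)}$, whence $X_{i}\in addY$.

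There is essentially no obstacle here beyond tracking which $Ext^{1}$ group witnesses which orientation of the extension; once the permutation $s$ from Theorem~\ref{Theorem4} is in hand, the corollary becomes a short bookkeeping exercise in homological vanishing, with the same permutation $s$ working simultaneously for all three cases.
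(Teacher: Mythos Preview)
Your argument is correct and is exactly the deduction the paper has in mind: the corollary is stated without proof immediately after Theorem~\ref{Theorem4}, and your use of the vanishing of $Ext^{1}_{A}(X_{i},-)$ (resp.\ $Ext^{1}_{A}(-,X_{i})$) to force the orientation of the non-split sequence is the intended reasoning. Your observation that a single permutation $s$ from Theorem~\ref{Theorem4} serves for all three parts is a nice sharpening of the statement.
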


\section{Examples}
In Theorem \ref{Theorem4} it is proved that the uncommon indecomposable summands of two 
multiplicity free tilting tilting modules appear at the end of some non-split exact sequences. On the other hand, the relation 
between the middle terms is mysterious and complicated. In this section, we provide some examples to illustrate this argument.
As the next example shows, the permutations described in Theorem \ref{Theorem4} are not always unique.
\begin{exam}\label{Ex1} There exist an algebra $A$ and two multiplicity free tilting modules $X=X_1\oplus \ldots \oplus X_4$, $Y=Y_1\oplus \ldots \oplus Y_4$
which satisfy the conditions of Theorem \ref{Theorem4} with the following properties:
\begin{itemize}
\item[(i)] $X$ is projective, $Y$ is injective and $addX\cap addY=0$.
\item[(ii)] There exist an even (resp. odd) permutation $s$ of $\{1,2,3,4\}$ and non-split exact sequences of the form 
\begin{center}
$0\rightarrow X_{i} \rightarrow V_{i}\rightarrow Y_{s(i)} \rightarrow 0$
\end{center}
for any $i=1,\ldots, 4$. Moreover, these sequences are not Auslander-Reiten sequences.
\end{itemize}
\end{exam}
\noindent
{\bf Construction:}
Let $A$ be the algebra given by the quiver

\begin{center}
$\begin{tikzcd} & \overset{1}{\bullet}\ar[rd]&\overset{2}{\bullet} \ar[d]&\overset{3}{\bullet}\ar[ld]\\
 && \underset{4}{\bullet} & &
 \end{tikzcd}$ 
\end{center}
Next, let $X=\begin{matrix}
1\\4
\end{matrix}\oplus\begin{matrix}
2\\4
\end{matrix}\oplus \begin{matrix}
3\\4
\end{matrix}\oplus 4$ and $Y=1\oplus2\oplus3\oplus\begin{matrix}
1~2~3\\
~4~
\end{matrix}$.
Then $(i)$ clearly holds.
Let $\{a,b,c\}=\{1,2,3\}$. We first note that there exist non-split exact sequences (which are not Auslander-Reiten sequences) of the following form:
\begin{itemize}
\item[(1)] $0\rightarrow \begin{matrix}
a\\4
\end{matrix}\rightarrow \begin{matrix}
a~b\\4 \end{matrix} \rightarrow b \rightarrow 0$, $0\rightarrow \begin{matrix}
a\\4
\end{matrix}\rightarrow \begin{matrix}
a~b\\4 \end{matrix}\oplus \begin{matrix}
a~c\\4 \end{matrix} \rightarrow \begin{matrix}
1~2~3\\4 \end{matrix} \rightarrow 0$;
\item[(2)] $0\rightarrow 4 \rightarrow \begin{matrix}
a\\4
\end{matrix}\rightarrow a \rightarrow 0$,  $0\rightarrow 4 \rightarrow \begin{array}{r}
1~2~3\\ 4~4~
\end{array}\rightarrow \begin{matrix}
1~2~3\\4
\end{matrix}\rightarrow 0$.
\end{itemize}
Let $F(1),\ldots, F(4)$ be the subsets defined in Lemma \ref{Lemma2}. Putting together $(i), (1)$ and $(2)$, we conclude that $F(1)=\{2,3,4\}$, $F(2)=\{1,3,4\}$, $F(3)=\{1,2,4\}$ and $F(4)=\{1,2,3,4\}$. Therefore, both the even permutation $(123)$ and the odd permutation $(1234)$ satisfy $(ii)$.
$\square$ \\ 

Note that the four exact sequences describing $(123)$ in the previous example have indecomposable middle term. On the other hand, one of the exact sequences describing $(1234)$ is of the form 
$$0\rightarrow \begin{matrix}
3\\4
\end{matrix}\rightarrow \begin{matrix}
1~3\\4 \end{matrix}\oplus \begin{matrix}
2~3\\4 \end{matrix} \rightarrow \begin{matrix}
1~2~3\\4 \end{matrix} \rightarrow 0.$$
Moreover, neither $(123)$ nor $(1234)$ are described by a reflection in the Auslander-Reiten quiver.

It suffices to consider another Dynkin diagram to find $A,X,Y,s$ as in Theorem \ref{Theorem4} such that the relations between $X_{i}$ and $Y_{s(i)}$ are all those possible a priori. In the following example we show that for tilting modules $X$ and $Y$ that have a common indecomposable summand, there exist non-split exact sequences such that the uncommon indecomposable summands of $X$ and $Y$ appear at the end terms. 

\begin{exam}\label{Ex2}
There exist an algebra $A$, two multiplicity free tilting modules $X=X_1\oplus \ldots \oplus X_4$, $Y=Y_1\oplus \ldots \oplus Y_4$ and $s$ as in Theorem \ref{Theorem4} with the following properties:
\begin{itemize}
\item[(i)] $X_1$ is isomorphic to $Y_{s(1)}$.
\item[(ii)] There exist non-split exact sequences of the form 
$$0\rightarrow X_{i}\rightarrow V_{i}\rightarrow Y_{s(i)}\rightarrow 0$$
with $i=2,4$ and $V_{i}$ is projective.
\item[(iii)] There exists a non-split exact sequence of the form 
$$0\rightarrow Y_{s(3)}\rightarrow V_3 \rightarrow X_3 \rightarrow 0$$ with $V_3$ projective.
\item[(iv)] $s$ is the unique permutation of $\{1,2,3,4\}$ satisfying Theorem \ref{Theorem4}.
\end{itemize}
\end{exam}
\noindent
{\bf Construction:} 
Let $A$ be the algebra given by the quiver 
\begin{center}

$\begin{tikzcd} \underset{1}{\bullet} \ar[r]  & \underset{2}{\bullet}\ar[r] & \underset{3}{\bullet}\ar[r] & \underset{4}{\bullet} \end{tikzcd}$

\end{center}
Next, let $X=\begin{matrix}
1\\2\\3\\4
\end{matrix}\oplus\begin{matrix}
2\\3\\4
\end{matrix}\oplus 2 \oplus 4$ and let $Y=\begin{matrix}
1\\2\\3\\4
\end{matrix}\oplus\begin{matrix}
3\\4
\end{matrix}\oplus 1 \oplus 3$. We first note that \\
$(1)$ $Ext_{A}^1(X,X)=Ext_{A}^1(2,\begin{matrix}
2\\3\\4
\end{matrix}\oplus 2 \oplus 4)$ and $Ext_{A}^1(Y,Y)=Ext_{A}^1(1\oplus 3,\begin{matrix}
3\\4
\end{matrix}\oplus 3)$.
Let $\tau$ denote the Auslander-Reiten translation. Then we have \\ 
\noindent 
$(2)$ $Hom_{A}\big(\begin{matrix}
2\\3\\4
\end{matrix}\oplus 2 \oplus 4, \tau(2)\big)=Hom_{A}\big(\begin{matrix}
2\\3\\4
\end{matrix}\oplus 2 \oplus 4, 3\big)=0$ and $Hom_{A}\big(\begin{matrix}
3\\4
\end{matrix}\oplus 3, \tau(1)\oplus \tau(3)\big)=Hom_{A}\big(\begin{matrix}
3\\4
\end{matrix}\oplus 3, 2\oplus 4\big)=0$. \\
Consequently, we deduce from $(1),(2)$ and \cite[Property (6) of page 76]{Ringel}
that $Ext_{A}^1(X,X)=0$ and $Ext_{A}^1(Y,Y)=0$. Since $A$ is a hereditary algebra with four simple modules, this implies that\\
$(3)$ $X$ and $Y$ are multiplicity free tilting modules such that $X_1=\begin{matrix}
1\\2\\3\\4
\end{matrix}=Y_1$.\\
Moreover, there exist exact sequences of the form \\
$(4)$ $0\rightarrow X_2=\begin{matrix}
2\\3\\4\end{matrix}\rightarrow \begin{matrix}
1\\2\\3\\4
\end{matrix}\rightarrow 1=Y_3\rightarrow 0$ and $0\rightarrow X_4=4\rightarrow \begin{matrix}
3\\4
\end{matrix}\rightarrow 3=Y_4\rightarrow 0$. \\
Finally, there exists an exact sequence of the form \\
$(5)$ $0\rightarrow Y_2=\begin{matrix}
3\\4\end{matrix}\rightarrow \begin{matrix}
2\\3\\4
\end{matrix}\rightarrow 2=X_3\rightarrow 0$.\\
Putting together $(3),(4)$ and $(5)$, we conclude that the modules $X$, $Y$ and the permutation $s=(23)$ satisfy the properties $(i),(ii)$ and $(iii)$. Again, by using the notation $F(i)$ for the following subset $F(i)= \{j \mid \text{either } X_{i}\simeq Y_{j} \text{ or } Ext^1_{A}(X_{i},Y_{j})\oplus 
Ext^1_{A}(Y_{j},X_{i})\neq 0 \}$, we observe that $F(1)=\{1\}$, $F(2)=\{3\}$, $F(3)=\{2,3,4\}$ and 
$F(4)=\{4\}$. Hence, also $(iv)$ holds. 
$\square$  
 
\bigskip
\noindent
The non-split exact sequences constructed in Example \ref{Ex2} have indecomposable
modules in the middle. It is easy to construct examples where this does not happen.

\begin{exam}\label{Ex3}
There exist an algebra $A$ and two multiplicity free tilting modules $X=X_1\oplus X_2 \oplus X_3$, $Y=Y_1\oplus Y_2 \oplus Y_3$ as in Theorem \ref{Theorem4} with the following properties:
\begin{itemize}
\item[(i)] There exists an Auslander-Reiten sequence of the form $$0\rightarrow X_3 \rightarrow V \rightarrow Y_3 \rightarrow 0$$ with $V$ decomposable and $dim_{K}Ext^{1}_{A}(Y_3,X_3)=1$.
\item[(ii)] There exists a unique permutation $s$ of $\{1,2,3\}$ satisfying Theorem \ref{Theorem4} and the map sending $X_{i}$ to $Y_{s(i)}$ for any $i=1,2,3$, is induced by a reflection in the Auslander-Reiten quiver of $A$.  
\end{itemize}
\end{exam}
\noindent
{\bf Construction:} 
Let $A$ be the algebra given by the quiver 
$\begin{tikzcd} \underset{1}{\bullet} \ar[r]  & \underset{2}{\bullet}\ar[r] & \underset{3}{\bullet} \end{tikzcd}$ and let $X$ and $Y$ be the following tilting modules: $X=\begin{matrix}
1\\2\\3\end{matrix}\oplus 2\oplus \begin{matrix}
2\\3\end{matrix}$, $Y=\begin{matrix}
1\\2\\3\end{matrix}\oplus 2\oplus \begin{matrix}
1\\2\end{matrix}$. Then the Auslander-Reiten quiver of $A$ is of the form 
\begin{center}
$\begin{tikzcd}
             &                                                       & \begin{array}{c}1\\2\\3\end{array} \arrow[rd] &                                            &   \\
             & \begin{array}{c}2\\3\end{array} \arrow[ru] \arrow[rd] &                                               & \begin{array}{c}1\\2\end{array} \arrow[rd] &   \\
3 \arrow[ru] &                                                       & 2 \arrow[ru]                                  &                                            & 1
\end{tikzcd}.
$
\end{center}
Consequently, we have $Hom_{A}(\begin{matrix}
2\\3\end{matrix}, \tau(\begin{matrix}
1\\2\end{matrix}))=Hom_{A}(\begin{matrix}
2\\3\end{matrix},\begin{matrix}
2\\3\end{matrix})\simeq K$. Hence, we deduce from \cite[Property (6) of page 76]{Ringel}
that $dim_{K}Ext^{1}_{A}(\begin{matrix}
1\\2\end{matrix},\begin{matrix}
2\\3\end{matrix})=1$. Since 
$0\rightarrow X_3=\begin{matrix}
2\\3\end{matrix}\rightarrow \begin{matrix}
1\\2\\3\end{matrix}\oplus 2\rightarrow \begin{matrix}
1\\2\end{matrix}=Y_3\rightarrow 0$ is an Auslander-Reiten sequence, it follows that $(i)$ holds.
We also note that $X_1=\begin{matrix}
1\\2\\3\end{matrix}=Y_1$, $X_2=2=Y_2$. Let $F(i)$ be the subset defined in Theorem \ref{Theorem4}. Then we clearly have $F(i)=\{i\}$ for $i=1,2,3$. This means that the identity map on $\{1,2,3\}$ is the unique permutation with the desired property. Therefore, also $(ii)$ holds. 
$\square$

\bigskip
\noindent
In the previous examples the dimension of a vector space of the form $Ext_{A}^1(W,U)$, where $\{U,W\}=\{X_{i},Y_{j}\}$ for some $i$ and $j$, is at most two. In the next example, the dimension of some vector space of this form may be bigger than two. To see this, it suffices to use a tame algebra, a projective tilting module, an injective tilting module and some regular modules.

\begin{exam}\label{Ex4} 
There exist an algebra $A$ and two multiplicity free tilting modules $X=X_1\oplus X_2$, $Y=Y_1\oplus Y_2$
as in Theorem \ref{Theorem4} with the following properties:
\begin{itemize}
\item[(i)] Any permutation $s$ of $\{1,2\}$ satisfies Theorem \ref{Theorem4}.
\item[(ii)] $dim_{K} Ext_{A}^1(Y_1,X_2)=2$ and if $0\rightarrow X_2 \rightarrow V \rightarrow Y_1 \rightarrow 0$ is a non-split exact sequence, then $V$ is an indecomposable regular module.
\item[(iii)] $dim_{K} Ext_{A}^1(Y_2,X_1)=4$ and there exist two non-isomorphic indecomposable (res. decomposable)
regular modules $L$ and $M$ and non-split exact sequences of the form $0\rightarrow X_1 \rightarrow V \rightarrow Y_2 \rightarrow 0$ with $V\in \{L,M\}.$
\item[(iv)] $dim_{K} Ext_{A}^1(Y_{i},X_{i})=3$ for any $i=1,2$ and there exist two non-isomorphic indecomposable regular modules $L$ and $M$, a decomposable regular module $N$ and non-split exact sequences of the form  
$0\rightarrow X_{i} \rightarrow V \rightarrow Y_{i} \rightarrow 0$ with $V\in \{L,M, N\}.$
\end{itemize}
\end{exam}
\noindent
{\bf Construction:} 
Let $A$ be the Kronecker algebra (\cite[page 302]{ARS} or \cite[page 122]{Ringel}) given by the quiver 
$\begin{tikzcd}[column sep=huge,row sep=huge]
 \underset{1}{\bullet} \arrow[r, "a"] 
					\arrow[r,"b", swap, shift right=4]
&  \underset{2}{\bullet}  
\end{tikzcd}$ 
Let $X=\begin{matrix}
1\\ 22\end{matrix}\oplus 2$ and let $Y=
1\oplus \begin{matrix}11\\2\end{matrix}$. Then we know from \cite[page 124]{Ringel} that the preinjective component of the Auslander-Reiten quiver of $A$ is of the form \\

$(1)$ \begin{center}$\begin{tikzcd}
\ldots \arrow[rd] \arrow[rd, shift right=2] & &\begin{array}{c} 1 ~ 1 ~ 1~\\ ~ 2 ~ 2 ~ \end{array} \arrow[rd] \arrow[rd, shift right=2] &  &1\\
\ldots                                      & \begin{array}{c} 1 ~ 1 ~ 1 ~ 1\\ ~ 2 ~ 2 ~ 2 ~\end{array} \arrow[ru, shift left=2] \arrow[ru] & & \begin{array}{c} 1 ~ 1\\~ 2 ~\end{array} \arrow[ru]  \arrow[ru, shift left=2] 
\end{tikzcd}.$
\end{center}
Hence, we obtain the following isomorphisms of vector spaces:\\
$(2) Hom_{A}\big(\begin{matrix}
1\\ 22\end{matrix}, \tau(1)\big)\simeq K^3, Hom_{A}(\begin{matrix}
1\\ 22\end{matrix}, \tau\big(\begin{matrix}
11\\ 2\end{matrix})\big)\simeq K^4, Hom_{A}(2, \tau(1))\simeq K^2$ and $Hom_{A}\big(2, \tau\big(
\begin{matrix}11\\2\end{matrix}\big)\big)\simeq K^3$. \\
This observation and \cite[Property $(6)$ of page 76]{Ringel} imply that\\
$(3)$ $dim_{K} Ext_{A}^1\big(1,\begin{matrix}
1\\ 22\end{matrix}\big)=3$, $dim_{K} Ext_{A}^1\big(\begin{matrix}
11\\ 2\end{matrix},\begin{matrix}
1\\ 22\end{matrix}\big)=4$, $dim_{K} Ext_{A}^1(1,2)=2$ and $dim_{K} Ext_{A}^1\big(\begin{matrix}
11\\ 2\end{matrix},2\big)=3$. \newline
Therefore, $(i)$ and $(ii)$ hold. Let $U$ and $W$ be the factor modules of $\begin{matrix}
1\\ 22\end{matrix}$ such that $ann_{A}(U)=<b>$ and $ann_{A}(W)=<a>$ described by the following pictures:
\begin{center}
$U:\begin{tikzcd}[row sep=large,column sep=large] 
\underset{1}{\bullet} \arrow[d,"{a}", no head]\\ 
\underset{2}{\bullet}  
\end{tikzcd}$\ \ \ \ , \ \ \ \ $W:\begin{tikzcd}[row sep=large,column sep=large]
 \underset{1}{\bullet} \arrow[d,"{b}", no head] \\
\underset{2}{\bullet}  
\end{tikzcd}$ .
\end{center}
Then $U$ and $W$ are simple regular modules by \cite[page 124]{Ringel}. 
As usual, for any $n\geq 2$, we denote the following pictures by $U[n]$ and $W[n]$.
Also note that $U[n]$ and $W[n]$ are indecomposable regular modules of dimension $2n$ 
with regular socles $U$ and $W$, respectively.

\begin{center}
$U[n]:\begin{tikzcd}
{\begin{array}{c}1\\ \bullet \end{array}} & {\begin{array}{c}1\\ \bullet \end{array}} & {} & {\begin{array}{c}1\\ \bullet \end{array}} & {\begin{array}{c}1\\ \bullet \end{array}} \\
	& {} & {\ldots\ldots} & {} & {} \\
	{\begin{array}{c}\bullet\\2\end{array}} & {\begin{array}{c}\bullet\\2\end{array}} && {\begin{array}{c}\bullet\\2\end{array}} & {\begin{array}{c}\bullet\\2\end{array}}
	\arrow["{a}"', from=1-1, to=3-1, no head]
	\arrow["{a}", from=1-2, to=3-2, no head]
	\arrow["{a}"', from=1-4, to=3-4, no head]
	\arrow["{a}", from=1-5, to=3-5, no head]
	\arrow["{b}", from=3-1, to=1-2, shift right=1, no head]
	\arrow["{b}", from=3-4, to=1-5, no head]
\end{tikzcd}$.
\end{center}
\begin{center}
$(n \ times)$
\end{center}
\begin{center}
$W[n] :\begin{tikzcd}
{\begin{array}{c}1\\ \bullet \end{array}} & {\begin{array}{c}1\\ \bullet \end{array}} & {} & {\begin{array}{c}1\\ \bullet \end{array}} & {\begin{array}{c}1\\ \bullet \end{array}} \\
	& {} & {\ldots\ldots} & {} & {} \\
	{\begin{array}{c}\bullet\\2\end{array}} & {\begin{array}{c}\bullet\\2\end{array}} && {\begin{array}{c}\bullet\\2\end{array}} & {\begin{array}{c}\bullet\\2\end{array}}
	\arrow["{b}"', from=1-1, to=3-1, no head]
	\arrow["{b}", from=1-2, to=3-2, no head]
	\arrow["{b}"', from=1-4, to=3-4, no head]
	\arrow["{b}", from=1-5, to=3-5, no head]
	\arrow["{a}", from=3-1, to=1-2, shift right=1, no head]
	\arrow["{a}", from=3-4, to=1-5, no head]
\end{tikzcd}$.
\begin{center}
$(n \ times)$
\end{center}
\end{center}
Let $L$ and $M$ denote the following indecomposable (resp., decomposable) modules: $L=U[3], M=W[3]$
(resp., $L=U\oplus W[2], M=W\oplus U[2]$). Then there exist non-split exact sequences of the form 
$0\rightarrow X_1=\begin{matrix}
1\\ 22\end{matrix} \rightarrow V \rightarrow Y_2=\begin{matrix}
11\\ 2\end{matrix}  \rightarrow 0$ with $V\in \{L,M\}$. This observation and $(3)$ imply that $(iii)$ holds. Finally, let $L,M,N$ be the following modules: $L=U[2], M=W[2], N=U\oplus W$. Then there exist non-split exact sequences of the form

$0\rightarrow X_1=\begin{matrix}
1\\ 22\end{matrix} \rightarrow V \rightarrow 1=Y_1\rightarrow 0$ and $0\rightarrow X_2=2 \rightarrow V \rightarrow \begin{matrix}
11\\ 2\end{matrix}=Y_2  \rightarrow 0$ with $V\in \{L,M,N\}$. This remark and $(3)$ imply that also $(iv)$
holds.
$\square$ \\ 

\noindent
The next example explains why the relationship between the indecomposable direct summands of two tilting modules is not always obvious. Indeed, the middle term $V$ of one of the exact sequences in  Theorem \ref{Theorem4} may be the direct sum of three indecomposable projective modules.

\begin{exam}\label{Ex.3.5}
There exist an algebra $A$ and two multiplicity free tilting modules $X=X_1\oplus \ldots \oplus X_6$, $Y=Y_1\oplus \ldots \oplus Y_6$ with $X_{i}$ and $Y_{i}$ indecomposable for any $i=1,\ldots, 6$ and a permutation $s$ of $\{1,\ldots,6\}$ with the following properties:
\begin{itemize}
\item[(i)]$Ext^1_{A}(X_{i},Y_{s(i)})\neq 0$ for any $i=1,\ldots,6.$
\item[(ii)] There are an index $j$ and a non-split exact sequence of the form $0\rightarrow Y_{s(j)}\rightarrow V\rightarrow X_{j}\rightarrow 0$ such that $V$ is the direct sum of three indecomposable projective modules.
\end{itemize}
\end{exam}
\noindent
{\bf Construction:} 
Let $A$ be the algebra given by the quiver 
\begin{center}
$\begin{tikzcd} &&\overset{5}{\bullet} \ar[d]&&\\
 \underset{1}{\bullet}\ar[r] & \underset{2}{\bullet}\ar[r]& \underset{6}{\bullet}&
 \underset{4}{\bullet}\ar[l] & \underset{3}{\bullet} \ar[l] 
 \end{tikzcd}$ 
\end{center}
Let $Y$ be the projective generator $Y=\begin{matrix}
1\\2\\6\end{matrix}\oplus \begin{matrix}
2\\ 6\end{matrix}\oplus \begin{matrix}
3\\4\\6\end{matrix}\oplus \begin{matrix}
4\\6\end{matrix}\oplus \begin{matrix}
5\\6\end{matrix}\oplus 6$.
Next, let $X$ be the tilting module constructed in \cite[pages 126-127]{HR}.
Then we have 
$X=\begin{matrix}
5\\6\end{matrix}\oplus \begin{matrix}
25\\6 \end{matrix}\oplus \begin{matrix}
54\\6\end{matrix}\oplus \begin{array}{c} 1 ~ ~ ~ ~ ~ 3\\ ~ 2 ~ 5 ~ 4 ~\\ ~~6~6~~\end{array}\oplus \begin{array}{c} 1 ~ ~ \\  2 ~ 5 \\ ~6~\end{array}\oplus \begin{array}{c} ~ ~ 3\\5~4 ~\\~6~\end{array}.$
Consequently, we have $X_1=\begin{matrix}
5\\ 6\end{matrix}=Y_5$ and there exist non-split exact sequences of the following form:\\
$(1)$ $0\longrightarrow Y_1=\begin{matrix}
1\\2\\6\end{matrix}\longrightarrow \begin{array}{c} 1 ~ ~ ~ ~ ~ 3\\ ~ 2 ~ 5 ~ 4 ~\\ ~~6~6~~\end{array}\longrightarrow \begin{array}{c} ~ ~ ~ ~ 3\\ ~ 5 ~ 4 \\ ~~6~~\end{array}=X_6 \longrightarrow 0$, \\
$(2)$ $0\longrightarrow Y_2=\begin{array}{c} 2\\6\end{array} \longrightarrow \begin{array}{c} 2 ~ 5 ~ 4\\ ~~6~6~~\end{array}\longrightarrow \begin{array}{c} 5 ~ 4\\ ~ 6 ~ \end{array}=X_3\longrightarrow 0$,\\
$(3)$ $0\longrightarrow Y_3=\begin{array}{c} 3\\4\\6\end{array} \longrightarrow \begin{array}{c} 1 ~ ~ ~ ~~3\\ ~2~5~4~\\~~6~6~~\end{array}\longrightarrow \begin{array}{c} 1 ~ ~\\ ~ 2 ~5~\\ ~~6~~ \end{array}=X_5\longrightarrow 0$,\\
$(4)$ $0\longrightarrow Y_4=\begin{array}{c} 4\\6\end{array} \longrightarrow \begin{array}{c} 2 ~ 5 ~ 4\\ ~~6~6~~\end{array}\longrightarrow \begin{array}{c} 2 ~ 5\\ ~ 6 ~ \end{array}=X_2\longrightarrow 0$,\\
$(5)$ $0\longrightarrow Y_6=6 \longrightarrow \begin{array}{c} 1\\2\\6\end{array}\oplus \begin{array}{c} 5\\6\end{array}\oplus \begin{array}{c} 3\\4\\6\end{array}\longrightarrow \begin{array}{c} 1 ~ ~ ~ ~~3\\ ~2~5~4~\\~~6~6~~\end{array}=X_4\longrightarrow 0.$\\
Therefore, the permutation $s=(153246)$ satisfies condition $(i)$. On the other hand condition $(ii)$
follows from $(5)$. $\square$ \\

\noindent
In the previous example, the middle terms of the exact sequences $(1)$, $(3)$ and $(5)$ belong (resp., $(2)$ and $(4)$ do not belong) to  $add (X\oplus Y).$\\

\noindent
{\bf Remark:} 
Happel and Ringels's tilting module constructed in \cite{HR}, that is the tilting $A$-module $X$
in the previous example, seems to be the first example of a non-obvious tilting module.
Since $A$ is hereditary, it follows that $X$ is also a cotilting module. We refer \cite[pages 78-79]{G},
for a naive description of the equivalence and the duality induced by $X$.

In the following example, we show that the permutation in Theorem \ref{Theorem4} is not always cyclic.
\begin{exam}\label{Ex.3.6}
There exist an algebra $A$, two multiplicity free tilting modules $X=X_1\oplus \ldots \oplus X_6$ and $Y=Y_1\oplus \ldots \oplus Y_6$ with $X_{i}$ and $Y_{i}$ indecomposable for any $i=1,\ldots, 6$ and a non-cyclic permutation $t$ of $\{1,\ldots,6\}$ with the following properties:
\begin{itemize}
\item[(i)]$Ext^1_{A}(Y_{t(i)},X_{i})\neq 0$ for any $i=1,\ldots,6.$
\item[(ii)] There exist non-split exact sequences of the form \begin{center}
$0\rightarrow X_{i}\rightarrow V_{i}\rightarrow Y_{t(i)}\rightarrow 0$ \end{center} 
such that either $V_{i}$ is an indecomposable summand of one of the modules $X$ and $Y$, or $V_{i}$ is the direct sum of two 
indecomposable summands of $X$.
\end{itemize}
\end{exam}
\noindent
{\bf Construction:} Let $A$ and $X$ be the algebra and the module constructed in Example \ref{Ex.3.5}. Next let $Y$ be the following injective tilting module:
\begin{center}
$Y=1\oplus\begin{array}{c} 1\\2\end{array}\oplus 3\oplus\begin{array}{c} 3\\4\end{array}\oplus 5\oplus
\begin{array}{c} 1 ~ ~ ~ ~~3\\ ~2~5~4~\\~~6~~\end{array}$.
\end{center} 
Then there exists non-split exact sequences of the following form:\\
$(1)$ $0\longrightarrow X_2=\begin{array}{c} 2 ~ 5\\ ~6~\end{array}\longrightarrow \begin{array}{c} 1 ~ ~ ~ ~\\ ~2~5~\\~6~\end{array}\longrightarrow 1=Y_1\longrightarrow 0$,\\
$(2)$ $0\longrightarrow X_3=\begin{array}{c} 5~ 4\\ ~6~\end{array}\longrightarrow \begin{array}{c} ~ ~ ~ ~3\\ ~5~4~\\~6~\end{array}\longrightarrow 3=Y_3\longrightarrow 0$,\\
$(3)$ $0\longrightarrow X_5=\begin{array}{c} 1~ ~ ~~\\ ~2~5~\\~6~\end{array}\longrightarrow \begin{array}{c} 1 ~ ~ ~ ~~3\\ ~2~5~4~\\~~6~~\end{array}\longrightarrow \begin{array}{c} 3\\4\end{array}=Y_4\longrightarrow 0$,\\
$(4)$ $0\longrightarrow X_6=\begin{array}{c} ~ ~ ~ ~3\\ ~5~4~\\~6~\end{array}\longrightarrow \begin{array}{c} 1 ~ ~ ~ ~~3\\ ~2~5~4~\\~~6~~\end{array}\longrightarrow \begin{array}{c} 1\\2\end{array}=Y_2\longrightarrow 0$,\\
$(5)$ $0\longrightarrow X_1=\begin{array}{c} 5\\6\end{array}\longrightarrow \begin{array}{c} 1 ~ ~ ~ ~\\ ~2~5~\\~6~\end{array}\oplus \begin{array}{c} ~ ~ ~ ~3\\ ~5~4~\\~6~\end{array} \longrightarrow \begin{array}{c} 1 ~ ~ ~ ~~3\\ ~2~5~4~\\~~6~~\end{array}=Y_6\longrightarrow 0$,\\
$(6)$ $0\longrightarrow X_4=\begin{array}{c} 1 ~ ~ ~ ~~3\\ ~2~5~4~\\~~6~6~~\end{array}\longrightarrow \begin{array}{c} 1 ~ ~ ~ ~\\ ~2~5~\\~6~\end{array}\oplus \begin{array}{c} ~ ~ ~ ~3\\ ~5~4~\\~6~\end{array} \longrightarrow 5=Y_5\longrightarrow 0$.\\
Let $t$ denote the permutation $(162)(45)$. Then comparing $(1),\ldots,(6)$, we conclude that $X$, $Y$ and $t$ satisfy conditions $(i)$ and $(ii)$. $\square$ \\

\bigskip
\noindent
In the next example, we construct two tilting modules $X$ and $Y$ with
\begin{itemize}
\item isomorphic injective envelopes;
\item isomorphic endomorphism rings;
\item the same injective summands, but different projective summands;
\item the same projective dimension, but different injective dimensions. 
\end{itemize}
We also note that in the next example there are exact sequences as in Theorem \ref{Theorem4} whose middle terms belong to $addX\cap addY.$
\bigskip

\begin{exam}\label{Ex.3.7}
There exist $K$-algebras $A$ and $\Lambda$ and multiplicity free tilting $A$-modules $X=X_1\oplus \ldots \oplus X_5$ and 
$Y=Y_1\oplus \ldots \oplus Y_5$ with $X_i$ and $Y_i$ indecomposable for any $i=1,\ldots, 5$ with the following properties:
\begin{itemize}
\item[(i)]$X$ and $Y$ are multiplicity free tilting modules with the same injective envelopes.
\item[(ii)] $X\subseteq Y$ and $Y/X$ is semisimple.
\item[(iii)] $X$ is a cotilting module and $idim Y=2$.
\item[(iv)] There is a permutation $s$ of $\{1,\ldots, 5\}$ such that either $X_{i}\simeq Y_{s(i)}$
or there exists a non-split exact sequence of the form
\begin{center}
$0\longrightarrow Y_{s(i)} \longrightarrow V_{i} \longrightarrow X_{i} \longrightarrow 0$
\end{center}
with $V_{i}\in add X \cap add Y$.
\item[(v)] There is a direct summand $U$ of $X$ and $Y$ such that $GenU=GenX\subsetneqq Gen Y$,
$CogenU=CogenX=CogenY$, and we have $Gen U\subseteq Gen T$ and $Cogen U\subseteq Cogen T$ for any
tilting $A$-module $T$.
\item[(vi)] $End_{A}(X)$ and $End_{A}(Y)$ are isomorphic to $\Lambda$.
\end{itemize}
\end{exam}
\noindent
{\bf Construction:} Let $A$ be the algebra given by the quiver
$$\begin{tikzcd}
\underset{1}{\bullet} \arrow[r] & \underset{2}{\bullet} \arrow[r] & \underset{3}{\bullet} \arrow[r] & \underset{4}{\bullet} \arrow[r] \arrow[lll, bend right=50]& \underset{5}{\bullet} \arrow[lll, bend left=30]
\end{tikzcd}$$
with relations such that the indecomposable projective left $A$-modules have the following form:
\begin{center}
$ \begin{array}{c} 1\\2\\3\\4\\1 \end{array}, \begin{array}{c} 2\\3\\4\\1 \end{array},
\begin{array}{c} 3\\4\\1 \end{array}, \begin{array}{c} ~4~\\1~5\\~2~ \end{array}, \begin{array}{c} 5\\2 \end{array}$.
\end{center}
Let $X$ and $Y$ denote the following modules:
\begin{center}
$X=\begin{array}{c} 1\\2\\3\\4\\1 \end{array} \oplus \begin{array}{c} 1\\2 \end{array}\oplus 
\begin{array}{c} ~4~\\1~5\\~2~ \end{array}\oplus 1 \oplus \begin{array}{c} 4\\1 \end{array}$ and 
$Y=\begin{array}{c} 1\\2\\3\\4\\1 \end{array} \oplus \begin{array}{c} 1~ 5\\~2~ \end{array}\oplus 
\begin{array}{c} ~4~\\1~5\\~2~ \end{array}\oplus 1 \oplus \begin{array}{c} 3\\4\\1 \end{array}$. 
\end{center} 
Then $X$ and $Y$ are direct sums of five pairwise non-isomorphic modules of projective dimension $\leq 1$.
We also note that \\
$(1)$ $Ext^{1}_{A}(X,X)=Ext^{1}_{A}(\begin{array}{c} 1\\2 \end{array}\oplus 1 \oplus \begin{array}{c}   4\\1 \end{array}, \begin{array}{c} 1\\2 \end{array}\oplus 1 \oplus \begin{array}{c}   4\\1 \end{array})$
and $Ext^{1}_{A}(Y,Y)=Ext^{1}_{A}(\begin{array}{c} 1~ 5\\~2~ \end{array}\oplus 1, \begin{array}{c} 1~ 5\\~2~ \end{array}\oplus 1 \oplus \begin{array}{c} 3\\4\\1 \end{array})$.\\
On the other hand, we have \\
$(2)$ $Hom_{A}\big(\begin{array}{c} 1\\2 \end{array}\oplus 1 \oplus \begin{array}{c} 4\\1 \end{array}, \tau(\begin{array}{c} 1\\2 \end{array})\oplus \tau(1) \oplus \tau(\begin{array}{c} 4\\1 \end{array})\big)
= Hom_{A}\big(\begin{array}{c} 1\\2 \end{array}\oplus 1 \oplus \begin{array}{c} 4\\1 \end{array}, \begin{array}{c} 2\\3 \end{array}\oplus \begin{array}{c} 5\\2 \end{array} \oplus \begin{array}{c} 5 \end{array}\big)=0$ \ and \ $Hom_{A}\big(\begin{array}{c} 1~ 5\\~2~ \end{array}\oplus 1 \oplus \begin{array}{c} 3\\4\\1 \end{array},\tau(\begin{array}{c} 1~ 5\\~2~ \end{array})\oplus \tau(1)\big)= 
Hom_{A}\big(\begin{array}{c} 1~ 5\\~2~ \end{array}\oplus 1 \oplus \begin{array}{c} 3\\4\\1 \end{array},\begin{array}{c} 2 \end{array}\oplus \begin{array}{c} 5\\2 \end{array}\big)=0$.\\
Comparing $(1)$ and $(2)$, we deduce from \cite[Proposition (6) of page 76]{Ringel} that $Ext^{1}_{A}(X,X)=0$ and $Ext^{1}_{A}(Y,Y)=0$. This implies that \\
$(3)$ $X$ and $Y$ are multiplicity free tilting modules.\\
We also note that $X\subseteq Y$ and $Y/X\simeq 3 \oplus 5$ and the injective envelopes of $X$
and $Y$ are both isomorphic to the injective envelopes of $1\oplus 1\oplus 1\oplus 2 \oplus 2$.
This remark and $(3)$ imply that $(i)$ and $(ii)$ hold. Since the indecomposable injective modules are
\begin{center}
$ \begin{array}{c} 1\\2\\3\\4\\1 \end{array}, \begin{array}{c} ~4~\\1~5\\~2~ \end{array}, \begin{array}{c} 1\\2\\3 \end{array},
\begin{array}{c} 1\\2\\3\\4 \end{array}, \begin{array}{c} 4\\5 \end{array}$,
\end{center}
it immediately follows that $idim X=1$ and $idim Y=2$. Hence, $(iii)$
follows from $(3)$ and from the definition of cotilting module. We also note that the following sequences 
are exact: \\
$(4)$
$0\rightarrow Y_2=\begin{array}{c} 1~ 5\\~2~ \end{array} \rightarrow \begin{array}{c} ~4~\\ 1~ 5\\~2~ \end{array}\oplus 1 \rightarrow \begin{array}{c} 4\\1 \end{array}=X_5\rightarrow 0$, \\
\noindent
$(5)$ $0\rightarrow Y_5=\begin{array}{c} 3\\4\\1 \end{array} \rightarrow \begin{array}{c} 1\\2\\3\\4\\1 \end{array} \rightarrow \begin{array}{c} 1\\2 \end{array}=X_2\rightarrow 0$.\\
The definition of $X$ and $Y$, $(4)$ and $(5)$ imply that $(iv)$ holds with $s=(25)$. 
Let $U$ be the module
$\begin{array}{c} 1\\2\\3\\4\\1 \end{array}\oplus
\begin{array}{c} ~4~\\1~5\\~2~ \end{array}$. Then there exist obvious epimorphisms $U=X_1\oplus X_3 \rightarrow X_{i}$ with $i=2,4,5.$ On the other hand, the projective module $Y_5=\begin{array}{c} 3\\4\\1 \end{array}$ is not a summand of $U$. Hence, we clearly have $GenU=GenX\neq GenY$ and $CogenU=CogenX=CogenY$. Since $U$ is a direct summand of any tilting module, we conclude that $(v)$ holds. 

Finally, let $Z_{i}=X_{i}$(resp., $Z_{i}=Y_{i}$) for any $i=1,\ldots, 5.$ Then we have $End_{A}(Z_1)\simeq k[x]/(x^2)$, $End_{A}(Z_{i})\simeq k$ for $i>1$ and $dim_{K} Hom_{A}(Z_{i},Z_{j})\leq 1$ if $i\neq j$. Moreover, a nonzero vector space of the form $Hom_{A}(Z_{i},Z_{j})$ with $i\neq j$ is generated by a suitable composition of the maps $f,g,h,l,m,p$ described by the following picture 
$$\begin{tikzcd}
                                                       &                                                                                                 & Z_3 \arrow[rd,"l"] &                                                                                      \\
Z_1 \arrow[r,"f"] & Z_2 \arrow[ru,"g"] \arrow[rd,"h"] &                                                                                & Z_5\arrow[lll,"p", bend left=110] \\
                                                       &                                                                                                 & Z_4\arrow[ru,"m"] &
\end{tikzcd}$$
with the property that $lg=mh$ and $fplgf=0=fpmhf.$ It follows that $End_{A}(X)$ and $End_{A}(Y)$ are isomorphic to the algebra $\Lambda$ given by the quiver
$$\begin{tikzcd}
                                                       &                                                                                                 & \underset{8}{\bullet} \arrow[rd] &                                                                                      \\
\underset{6}{\bullet} \arrow[r] & \underset{10}{\bullet} \arrow[ru] \arrow[rd] &                                                                                & \underset{7}{\bullet}\arrow[lll, bend left=110] \\
                                                       &                                                                                                 & \underset{9}{\bullet}\arrow[ru] &
\end{tikzcd}$$
with relations such that the indecomposable projective $\Lambda$-modules have the following form 
\begin{center}
$\begin{array}{c} 6\\10 \\8~9\\7\\6 \end{array},\begin{array}{c} 7\\6 \end{array}, 
\begin{array}{c} 8\\7\\6 \end{array}, \begin{array}{c} 9\\7\\6 \end{array}, \begin{array}{c} 10 \\8~9\\7\\6 \end{array}$. 
\end{center}
Hence, also $(iv)$ holds. $\square$ \\

\noindent
\textbf{Acknowledgements:} The authors would like to thank warmly Professor Luise Unger. Indeed, she informed them that Happel's book \cite{HR3} contains the proof of the equivalence of $(\gamma)$ and $(\gamma')$ for a partial tilting module. 

The authors also would like to thank the referee for carefully reading the manuscript and for valuable comments.




     
    


\end{document}